\theoremstyle{plain}
\newtheorem{theorem}{Theorem}[section]
\newtheorem{lemma}[theorem]{Lemma}
\newtheorem{proposition}[theorem]{Proposition}
\newtheorem{definition}[theorem]{Definition}
\theoremstyle{definition}
\newtheorem{remark}[theorem]{Remark}
\newcommand{\Eq}{ \ensuremath{\mathrm{Eq}} }
\title{\vskip 5pt  \bf  GOURSAT COMPLETIONS}
\author{\itshape\bfseries { Diana RODELO and Idriss TCHOFFO NGUEFEU}}
\date{}
\begin{document}
\maketitle

\thispagestyle{empty}
\vskip 25pt
\begin{adjustwidth}{0.5cm}{0.5cm}
{\small
{\bf Abstract.} We characterize categories with weak finite limits whose regular completions give rise to Goursat categories.\\
{\bf Keywords.} regular category, projective cover, Goursat category, $3$-permutable variety.\\
{\bf Mathematics Subject Classification (2010).} 08C05, 18A35, 18B99,18E10.
}
\end{adjustwidth}


\section{Introduction}

The construction of the free exact category over a category with finite limits was introduced in ~\cite{FECLEO}. It was later improved to the construction of the free exact category over a category with finite weak limits (\emph{weakly lex}) in \cite{REC}. This followed from the fact that the uniqueness of the finite limits of the original category is never used in the construction; only the existence. In ~\cite{REC}, the authors also considered the free regular category over a weakly lex one.

An important property of the free exact (or regular) construction is that such categories always have enough (regular) projectives. In fact, an exact category $\mathbb{A}$ may be seen as the exact completion of a weakly lex category if and only if it has enough projectives. If so, then $\mathbb{A}$ is the exact completion of any of its \emph{projective covers}. Such a phenomenon is captured by varieties of universal algebras: they are the exact completions of their full subcategory of free algebras.

Having this link in mind, our main interest in studying this subject is to characterize projective covers of certain algebraic categories through simpler properties involving projectives and to relate those properties to the known varietal characterizations in terms of the existence of operations of their varietal theories (when it is the case). Such kind of studies have been done for the projective covers of categories which are: Mal'tsev~\cite{ECRAC}, protomodular and semi-abelian~\cite{G}, (strongly) unital and subtractive~\cite{compl}.

The aim of this work is to obtain characterizations of the weakly lex categories whose regular completion is a Goursat (=$3$-permutable) category (Propositions~\ref{cover} and ~\ref{weaksquare}). We then relate them to the existence of the quaternary operations which characterize the varieties of universal algebras which are $3$-permutable (Remark~\ref{vars}).


\section{Preliminaries}\label{pre}

In this section, we briefly recall some elementary categorical notions needed in the following.

A category with finite limits is \textbf{regular} if regular epimorphisms are stable under pullbacks, and kernel pairs have coequalizers. Equivalently, any arrow $f: A\longrightarrow B$ has a unique factorisation $f=i r $ (up to isomorphism), where $r$ is a regular epimorphism and $i$ is a monomorphism and this factorisation is pullback stable.

A \textbf{relation} $R$ from  $X$ to  $Y$  is a subobject $\langle r_1,r_2 \rangle : R \rightarrowtail X \times Y $. The opposite relation of $R$, denoted $R^o$, is the relation from $Y$ to $X$ given by the subobject $\langle r_2,r_1 \rangle : R \rightarrowtail Y \times X $. A relation $R$ from $X$ to $X$ is called a relation on $X$. We shall identify a morphism $f: X \longrightarrow Y$ with the relation $\langle 1_X,f \rangle: X \rightarrowtail X \times Y$ and write $f^o$ for its opposite relation. Given two relations $R \rightarrowtail X \times Y $ and $S \rightarrowtail Y \times Z $ in a regular category, we write $SR \rightarrowtail X \times Z $ for their relational composite. With the above notations, any relation $\langle r_1, r_2 \rangle: R \rightarrowtail X \times Y$ can be seen as the relational composite $r_2r_1^o$.
The following properties are well known and easy to prove (see \cite{ckp} for instance); we collect them in the following lemma:
\begin{lemma}
\label{lem}
Let $f: X \longrightarrow Y$ be an arrow in a regular category  $\mathbb{C}$, and let  $ f=i r $ be its (regular epimorphism, monomorphism) factorisation. Then:
\begin{enumerate}
  \item $f^of$ is the kernel pair of $f$, thus $1_X \leqslant f^of $; moreover, $1_X = f^of$ if and only if $f$ is a monomorphism;
  \item $ff^o$ is $(i,i)$, thus $ff^o \leqslant 1_Y$; moreover, $ff^o = 1_Y$ if and only if $f$ is a  regular epimorphism;
  \item $ff^of = f $ and $f^off^o = f^o$.
\end{enumerate}
\end{lemma}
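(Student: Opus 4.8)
The plan is to establish the three items by computing the relevant relational composites through their image factorisations, and then to deduce item~(3) formally from items~(1) and~(2) using only monotonicity of relational composition.

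For item~(1), I would unwind the definition of the composite $f^o f$ of the relations $f=\langle 1_X,f\rangle\colon X\rightarrowtail X\times Y$ and $f^o=\langle f,1_X\rangle\colon X\rightarrowtail Y\times X$. By definition of relational composition, this relation is obtained by pulling back the ``codomain leg'' $f$ of the first relation along the ``domain leg'' $f$ of the second, and then taking the image of the induced span into $X\times X$. But both legs in question are $f$ itself, so the pullback is exactly the kernel pair $R[f]$ of $f$ together with its two projections $p_1,p_2\colon R[f]\to X$, and the induced span is $\langle p_1,p_2\rangle$. The key observation is that $\langle p_1,p_2\rangle$ is already a monomorphism --- this is precisely the joint monicity of the legs of a pullback, immediate from the uniqueness clause in its universal property --- so no image factorisation is needed and $f^o f=R[f]$. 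Since the canonical map $X\to R[f]$ induced by the trivial equality $f=f$ witnesses $\Delta_X\leqslant R[f]$, we get $1_X\leqslant f^o f$; and $1_X=f^o f$ means exactly that $R[f]$ is discrete, i.e.\ $p_1=p_2$, which is the standard characterisation of $f$ being a monomorphism.

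For item~(2), I would similarly compute $f f^o$, which is the relation on $Y$ given by the image of $\langle f,f\rangle\colon X\to Y\times Y$. Writing $f=ir$ with $r$ a regular epimorphism and $i$ a monomorphism, we have $\langle f,f\rangle=\langle i,i\rangle\circ r=(\Delta_Y\circ i)\circ r$; since $r$ is a regular epimorphism and $\Delta_Y\circ i$ is a monomorphism, this is a (regular epimorphism, monomorphism) factorisation, so by its essential uniqueness $f f^o=\langle i,i\rangle$, that is, ``$f f^o$ is $(i,i)$''. Then $f f^o=\Delta_Y\circ i\leqslant\Delta_Y$ gives $f f^o\leqslant 1_Y$, and $f f^o=1_Y$ holds if and only if $i$ is an isomorphism, which by essential uniqueness of the factorisation is in turn equivalent to $f$ being a regular epimorphism.

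For item~(3), no further computation is needed, only monotonicity of relational composition: from $1_X\leqslant f^o f$ (item~(1)) we get $f=f\cdot 1_X\leqslant f f^o f$, and from $f f^o\leqslant 1_Y$ (item~(2)) we get $f f^o f\leqslant 1_Y\cdot f=f$, hence $f f^o f=f$; applying $(-)^o$, which reverses composites and preserves the order, to this identity yields $f^o f f^o=f^o$. I do not expect a genuine obstacle here: the only mildly delicate points are the joint monicity of the pullback projections used in~(1) and the recognition of $(\Delta_Y\circ i)\circ r$ as a legitimate (regular epi, mono) factorisation in~(2), both of which are routine, since the lemma merely repackages standard facts about regular categories.
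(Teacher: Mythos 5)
Your proposal is correct. Note that the paper does not actually prove this lemma --- it records the statements as ``well known and easy to prove'' and points to Carboni--Kelly--Pedicchio --- so there is no in-paper argument to compare against; your write-up is exactly the standard one that reference supplies. All three steps check out under the paper's conventions: the composite $f^of$ is computed by pulling back the second leg of $\langle 1_X,f\rangle$ against the first leg of $\langle f,1_X\rangle$, which is the kernel pair, and joint monicity of its projections makes the image factorisation redundant; the composite $ff^o$ reduces to the image of $\langle f,f\rangle=(\Delta_Y i)r$, and your observation that this is already a (regular epi, mono) factorisation is the right shortcut; and deriving (3) purely from monotonicity of composition together with $1_X\leqslant f^of$ and $ff^o\leqslant 1_Y$, then applying $(-)^o$, is clean and avoids any further diagram chase.
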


A relation $R$ on $X$ is \textbf{reflexive} if $1_X \leqslant R$, \textbf{symmetric} if $R^o \leqslant R$, and \textbf{transitive} if $RR \leqslant R$. As usual, a relation $R$ on $X$ is an \textbf{equivalence relation} when it is reflexive, symmetric and transitive. In particular, a kernel pair $\langle f_1,f_2 \rangle: \Eq(f)\rightarrowtail X \times X$ of a morphism $f: X \longrightarrow Y$ is an equivalence relation.

By dropping the assumption of uniqueness of the factorization in the definition of a limit, one obtains the definition of a weak limit. We call \textbf{weakly lex} a category with weak finite limits.

An object $P$ in a category is (regular) \textbf{projective} if, for any arrow $f: P \longrightarrow X$ and for any regular epimorphism $g: Y \twoheadrightarrow X$ there exists an arrow $h: P \longrightarrow Y$ such that $g h = f $. We say that a full subcategory $\mathbb{C}$ of $\mathbb{A}$ is a \textbf{projective cover} of $\mathbb{A}$ if two conditions are satisfied:
\begin{itemize}
\item any object of $\mathbb{C}$ is regular projective in $\mathbb{A}$;
\item for any object $X$ in $\mathbb{A}$, there exists a ($\mathbb{C}$-)cover of $X$, that is an object $C$ in $\mathbb{C}$ and a regular epimorphism $C\twoheadrightarrow X$.
\end{itemize}

When $\mathbb{A}$ admits a projective cover, one says that $\mathbb{A}$ has \textit{enough projectives}.
\begin{remark}\label{weaklims}
If $\mathbb{C}$ is a projective cover of a weakly lex category $\mathbb{A}$, then $\mathbb{C}$ is also weakly lex~\cite{REC}. For example, let $X$ and $Y$ be objects in $\mathbb{C}$ and $\xymatrix@C=15pt{ X & W \ar[l] \ar[r] & Y}$ a weak product of $X$ and $Y$ in $\mathbb{A}$. Then, for any cover $\bar{W}\twoheadrightarrow W$ of $W$, $\xymatrix@C=15pt{ X & \bar{W} \ar[l] \ar[r] & Y}$ is a weak product of $X$ and $Y$ in $\mathbb{C}$. Furthermore, if $\mathbb{A}$ is a regular category, then the induced morphism $W\twoheadrightarrow X\times Y$ is a regular epimorphism.
Similar remarks apply to all weak finite limits.
\end{remark}


\section{Goursat categories}
In this section we review the notion of Goursat category and the characterizations of Goursat categories through regular images of equivalence relations and through Goursat pushouts.

\begin{definition} \emph{\cite{CLP,ckp}}
A regular category $\mathbb{C}$ is called a \textbf{Goursat category} when the equivalence relations in $\mathbb{C}$ are $3$-permutable, i.e. $RSR = SRS$ for any pair of equivalence relations $R$ and $S$ on the same object.
\end{definition}

When $\mathbb{C}$ is a regular category, $(R,r_1,r_2)$ is an equivalence relation on $X$ and $f: X \twoheadrightarrow Y$ is a regular epimorphism, we define the \textbf{regular image of $R$ along $f$} to be the relation $f(R)$ on $Y$ induced by the (regular epimorphism, monomorphism) factorization $\langle s_1, s_2 \rangle \psi$ of the composite $(f\times f) \langle r_1,r_2\rangle$:
\[
      \xymatrix{
R \ar@{.>>}[r]^{\psi} \ar@{ >->}[d]_{\langle r_1,r_2\rangle} & f(R)  \ar@{ >.>}[d]^{\langle s_1,s_2\rangle}\\
X \times X \ar@{>>}[r]_{f \times f} & Y \times Y.
  }
\]
Note that the regular image $f(R)$ can be obtained as the relational composite $f(R)=fRf^o=fr_2r_1^of^o$. When $R$ is an equivalence relation, $f(R)$ is also reflexive and symmetric. In a general regular category $f(R)$ is not necessarily an equivalence relation.
This is the case in a \emph{Goursat category} according to the following theorem.

\begin{theorem}\label{CKP} \emph{\cite{ckp}}  A regular category $\mathbb{C}$ is a Goursat category if and only if for any regular epimorphism $f: X \twoheadrightarrow Y$ and any equivalence relation $R$ on $X$, the regular image $f(R)= fRf^o$ of $R$ along $f$ is an equivalence relation.
\end{theorem}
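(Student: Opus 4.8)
The plan is to prove the equivalence in Theorem~\ref{CKP} in two directions, both built on the relational calculus recalled in Lemma~\ref{lem}. For the ``only if'' direction, suppose $\mathbb{C}$ is Goursat, let $f\colon X\twoheadrightarrow Y$ be a regular epimorphism and $R$ an equivalence relation on $X$. We already know from the discussion above that $f(R)=fRf^o$ is reflexive and symmetric, so the only thing to check is transitivity, i.e. $f(R)f(R)\leqslant f(R)$. Here the key trick is to introduce the kernel pair $\Eq(f)=f^of$, which is an equivalence relation on $X$, and to observe that $1_X\leqslant R$ gives $f^of\leqslant f^o f R f^o f$ while $ff^o=1_Y$ (by Lemma~\ref{lem}(2), since $f$ is a regular epimorphism) lets us pass back and forth between relations on $X$ and relations on $Y$. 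Concretely, $f(R)f(R)=fRf^ofRf^o=fR(f^of)Rf^o$, and I would bound $R(f^of)R$ using $3$-permutability: set $S=f^of$, then $RSR=SRS=f^of R f^of$, so $f(R)f(R)=f(f^of Rf^of)f^o=(ff^of)R(f^off^o)=fRf^o=f(R)$, using $ff^of=f$ and $f^off^o=f^o$ from Lemma~\ref{lem}(3). This chain of equalities is really the whole argument for this direction.

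For the ``if'' direction, assume that the regular image of every equivalence relation along every regular epimorphism is again an equivalence relation; I must deduce $RSR=SRS$ for arbitrary equivalence relations $R,S$ on an object $X$. The natural move is to form a single regular epimorphism that simultaneously ``collapses'' $S$ and whose domain carries a copy of $R$: concretely, take the coequalizer $q\colon X\twoheadrightarrow X/S$ of the projections $s_1,s_2\colon S\to X$ (which exists since $\mathbb{C}$ is regular), so that $\Eq(q)=q^oq=S$. Then $q(R)=qRq^o$ is an equivalence relation on $X/S$ by hypothesis, so in particular $q(R)q(R)\leqslant q(R)$, i.e. $qRq^oqRq^o\leqslant qRq^o$, that is $qRSRq^o\leqslant qRq^o$. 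Composing with $q^o$ on the left and $q$ on the right and using $q^oq=S$ together with $q^oqRq^oq=SRS$ and $q^oqRSRq^oq=SRSRS\geqslant RSR$ (since $1\leqslant S$), one squeezes out $RSR\leqslant SRS$; by symmetry in $R$ and $S$ (interchanging the roles and taking the coequalizer of $R$ instead) one gets the reverse inclusion $SRS\leqslant RSR$, hence equality.

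In writing this up I would be careful about which regular-category facts are being invoked: the existence of coequalizers of kernel pairs, the identity $\Eq(q)=q^oq$ for a regular epimorphism $q$ that is the coequalizer of an equivalence relation's projections, and the monotonicity and associativity of relational composition together with the three items of Lemma~\ref{lem}. The main obstacle, and the step deserving the most care, is the ``if'' direction: one has to make sure that the inequalities obtained after pre- and post-composing with $q^o$ and $q$ can actually be tightened to the desired inclusion rather than to something strictly weaker. The cleanest way to handle this is to note that since $1_X\leqslant R$ and $1_X\leqslant S$ we have $R\leqslant SR$, $R\leqslant RS$, and more importantly $q^o q R q^o q = SRS$ on the nose (not merely an inequality), because $q^oqRq^oq = q^o(qRq^o)q$ and one can reinsert the outer $S$'s symmetrically; so applying $q^o(-)q$ to $qRSRq^o\leqslant qRq^o$ yields $SRSRS\leqslant SRS$, and then $RSR\leqslant SRSRS\leqslant SRS$. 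This ``sandwiching'' with the reflexive relations $R$ and $S$ is the one spot where it is easy to slip, so I would spell it out carefully.
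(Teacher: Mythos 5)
The paper offers no proof of Theorem~\ref{CKP}; it is quoted from \cite{ckp}. Your ``only if'' direction is correct and is the standard relational computation: writing $\Eq(f)=f^of$ and using the Goursat identity $R\,(f^of)\,R=(f^of)\,R\,(f^of)$ together with $ff^of=f$ and $f^off^o=f^o$ from Lemma~\ref{lem}(3) collapses $f(R)f(R)$ to $f(R)$.

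The ``if'' direction, however, has a genuine gap. You take the coequalizer $q\colon X\twoheadrightarrow X/S$ of $s_1,s_2$ and assert $\Eq(q)=q^oq=S$. That identity says precisely that $S$ is an \emph{effective} equivalence relation (i.e.\ a kernel pair), which is guaranteed in a Barr-exact category but not in a general regular one: in a regular, non-exact category such as topological groups there are equivalence relations that are not kernel pairs. In general one only has $S\leqslant q^oq$, so every subsequent manipulation concerns $q^oq$ rather than $S$, and the conclusion $RSR\leqslant SRS$ does not follow. The standard repair avoids coequalizers altogether: by reflexivity the projection $s_1\colon S\to X$ is a split, hence regular, epimorphism, and $s_2^{-1}(R)=s_2^oRs_2$ is an equivalence relation on the object $S$; its regular image along $s_1$ is $s_1s_2^oRs_2s_1^o=S^oRS=SRS$, which by hypothesis is an equivalence relation on $X$. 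Since $R=1R1\leqslant SRS$ and $S=S1S\leqslant SRS$, transitivity of $SRS$ gives $RSR\leqslant SRS$, and the reverse inclusion follows by exchanging the roles of $R$ and $S$. Your final ``sandwiching'' step is exactly right; what fails is the identification of $S$ with the kernel pair of its coequalizer.
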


Goursat categories are well known in Universal Algebra. In fact, by a classical theorem in \cite{hm}, a variety of universal algebras is a Goursat category precisely when its theory has two quaternary operations $p$ and $q$ such that the identities $p(x,y,y,z)= x$, $q(x,y,y,z)= z$ and $p(x,x,y,y)= q(x,x,y,y)$ hold. Accordingly, the varieties of groups, Heyting algebras and implication algebras are Goursat categories. The category of topological group, Hausdorff groups, right complemented semi-group  are also Goursat categories.

There are many known characterizations of Goursat categories (see \cite{ckp,gr,grod,grt} for instance). In particular the following characterization, through Goursat pushouts,  will be useful:
\begin{theorem}\label{Goursatpushout}\emph{\cite{gr}}
Let $\mathbb{C}$ be a regular category. The following conditions are equivalent:
\begin{enumerate}
 \item[(i)] $\mathbb{C}$ is a Goursat category;
 \item[(ii)] any commutative diagram of type \emph{($\mathrm{I}$)} in $\mathbb{C}$, where $\alpha$ and $\beta$ are regular epimorphisms and $f$ and $g$ are split epimorphisms
  \[
    \xymatrix@C=2cm{
   X \ar@{}[dr]|{(\mathrm{I})} \ar@{->>}[r]^{\alpha} \ar@<3pt>[d]^{f}  & U \ar@<3pt>[d]^{g} \ar@<40pt>@{}[d]^(.3){g\alpha=\beta f} \ar@<40pt>@{}[d]^(.7){\alpha s=t\beta} \\ Y \ar@{->>}[r]_{\beta} \ar@<3pt>[u]^{s}& W, \ar@<3pt>[u]^t
  }
\]
(which is necessarily a pushout) is a \textbf{Goursat pushout}: the morphism $ \lambda : \Eq(f) \longrightarrow \Eq(g)$, induced by the universal property of kernel pair $\Eq(g)$ of $g$, is a regular epimorphism.
\end{enumerate}
\end{theorem}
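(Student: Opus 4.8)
The plan is to deduce both implications from Theorem~\ref{CKP}, using the regular image $\alpha(\Eq(f))=\alpha\,\Eq(f)\,\alpha^o$ of $\Eq(f)$ along $\alpha$ as the common device. The preliminary remark, valid in any regular category, is this: in a diagram of type $(\mathrm{I})$ the comparison $\lambda\colon\Eq(f)\longrightarrow\Eq(g)$ is by construction the factorisation of $(\alpha\times\alpha)\langle f_1,f_2\rangle\colon\Eq(f)\to U\times U$ through the monomorphism $\Eq(g)\rightarrowtail U\times U$; taking the (regular epimorphism, monomorphism) factorisation of that composite, $\lambda$ splits as $\Eq(f)\twoheadrightarrow\alpha(\Eq(f))\rightarrowtail\Eq(g)$. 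Hence $\lambda$ is a regular epimorphism if and only if $\alpha(\Eq(f))=\Eq(g)$; and since $\alpha(\Eq(f))\leqslant\Eq(g)$ always holds, in both directions it only remains to establish the reverse inequality $\Eq(g)\leqslant\alpha(\Eq(f))$.

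For $(\mathrm{i})\Rightarrow(\mathrm{ii})$ I would argue as follows. By Theorem~\ref{CKP} the relation $E:=\alpha(\Eq(f))=\alpha\,\Eq(f)\,\alpha^o$, being the regular image of an equivalence relation along a regular epimorphism, is an equivalence relation on $U$. Since $f\cdot(sf)=f$ we have $sf\leqslant\Eq(f)$; applying the monotone operation $\alpha(-)\alpha^o$ and using $\alpha s=t\beta$, $\beta f=g\alpha$ and $\alpha\alpha^o=1_U$ (Lemma~\ref{lem}), this gives $tg=\alpha\,(sf)\,\alpha^o\leqslant E$. Now $E$ is symmetric, so $g^o t^o=(tg)^o\leqslant E$; $E$ is transitive, so $g^o t^o t g\leqslant E$; and $t$ is a split monomorphism, so $t^o t=1_W$ (Lemma~\ref{lem}). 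Therefore $\Eq(g)=g^o g=(tg)^o(tg)\leqslant E$, whence $\Eq(g)=E$ and $\lambda$ is a regular epimorphism.

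For $(\mathrm{ii})\Rightarrow(\mathrm{i})$ I would use Theorem~\ref{CKP} in the other direction: it suffices to show that $f(R)=fRf^o$ is transitive for every equivalence relation $R$ on $X$ and every regular epimorphism $f\colon X\twoheadrightarrow Y$. With $r_1,r_2\colon R\to X$ the projections of $R$ and $\delta_R\colon X\to R$ their common section, one first records the identity $f(R)=(fr_1)\,\Eq(r_2)\,r_1^o f^o$ (immediate from $r_2r_1^o=R$, $r_1r_2^o=R^o=R$, $RR=R$), exhibiting $f(R)$ as the regular image of the kernel pair $\Eq(r_2)$ along the regular epimorphism $fr_1$. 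Then I would apply the hypothesis to the diagram of type $(\mathrm{I})$ with $R$ in the top-left corner and $X$ below it, with $f(R)$ — the relation regarded as an object, with its two projections $p_1,p_2\colon f(R)\to Y$ — in the top-right corner and $Y$ below it; the left-hand split epimorphism is $r_2$ with section $\delta_R$, the right-hand split epimorphism is $p_2$ with section the diagonal $d$ of the reflexive relation $f(R)$, the bottom morphism is $f$, and the top morphism is the canonical comparison $\alpha\colon R\twoheadrightarrow f(R)$ (the regular epimorphism part of $\langle fr_1,fr_2\rangle$). A routine check shows this is a diagram of type $(\mathrm{I})$ — $p_2\alpha=fr_2$, $\alpha\delta_R=df$, $r_2\delta_R=1_X$, $p_2d=1_Y$ — so $(\mathrm{ii})$ together with the preliminary remark gives $\Eq(p_2)=\alpha\,\Eq(r_2)\,\alpha^o$. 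Since also $p_1\alpha=fr_1$, this yields
\[
f(R)=(fr_1)\,\Eq(r_2)\,r_1^o f^o=p_1\bigl(\alpha\,\Eq(r_2)\,\alpha^o\bigr)p_1^o=p_1\,\Eq(p_2)\,p_1^o=(p_1p_2^o)(p_2p_1^o)=f(R)\circ f(R),
\]
the last step using that $f(R)=p_2p_1^o$ is symmetric. So $f(R)$ is transitive, hence an equivalence relation, and Theorem~\ref{CKP} concludes.

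The relational manipulations above are all routine consequences of Lemma~\ref{lem} and of the fact that a relation $\langle\rho_1,\rho_2\rangle$ equals the composite $\rho_2\rho_1^o$, and checking that the two displayed configurations are genuine diagrams of type $(\mathrm{I})$ is mechanical. The step I expect to be the real obstacle is pinning down the correct diagram in $(\mathrm{ii})\Rightarrow(\mathrm{i})$: it must be arranged so that its two parallel morphisms are split — which forces $R$ into the top-left corner and uses its reflexivity twice, to produce both $\delta_R$ and the diagonal $d$ — while at the same time the conclusion $\Eq(g)=\alpha\,\Eq(f)\,\alpha^o$ must be transportable back to transitivity of $f(R)$, which is exactly what the equalities $p_1\alpha=fr_1$, $p_2\alpha=fr_2$ allow. (The parenthetical claim that such a diagram is automatically a pushout follows because $f$ is split by $s$ and $\alpha,\beta$ are epimorphisms: given $u\colon U\to T$, $v\colon Y\to T$ with $u\alpha=vf$, the morphism $ut\colon W\to T$ is the unique mediating arrow; this plays no role in the equivalence.)
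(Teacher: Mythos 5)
This theorem is quoted from \cite{gr}; the paper itself contains no proof of it, so there is nothing internal to compare your argument against. On its own terms your proof is correct, and it is essentially the argument of \cite{gr}: both implications are reduced to Theorem~\ref{CKP} through the observation (which is Remark~\ref{variant} of this paper) that $\lambda$ is a regular epimorphism precisely when $\alpha(\Eq(f))=\Eq(g)$. In (i)~$\Rightarrow$~(ii), the chain $sf\leqslant\Eq(f)$, hence $tg=\alpha(sf)\alpha^o\leqslant E:=\alpha(\Eq(f))$, followed by symmetry, transitivity of $E$ and $t^ot=1_W$, does yield $\Eq(g)=g^ot^otg\leqslant E$, and each step is licensed by Lemma~\ref{lem}. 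In (ii)~$\Rightarrow$~(i), the square built on $r_2\colon R\to X$ (split by the reflexivity $\delta_R$) and $p_2\colon f(R)\to Y$ (split by the diagonal $d$) is the right one: the identities $p_i\alpha=fr_i$ and $\alpha\delta_R=df$ check out, the hypothesis gives $\Eq(p_2)=\alpha\,\Eq(r_2)\,\alpha^o$, and the relational computation $f(R)=fr_1\,\Eq(r_2)\,r_1^of^o=p_1\,\Eq(p_2)\,p_1^o=f(R)f(R)$ correctly delivers transitivity, reflexivity and symmetry of $f(R)$ being automatic. I found no gaps; the only points left implicit --- that $\alpha(\Eq(f))\leqslant\Eq(g)$ always holds, and that $fr_1$ is a regular epimorphism --- are the routine verifications you flag them as.
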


\begin{remark}
\label{variant} Diagram ($\mathrm{I}$) is a Goursat pushout precisely when the regular image of $\Eq(f)$ along $\alpha$ is (isomorphic to) $\Eq(g)$. From Theorem~\ref{Goursatpushout}, it then follows that a regular category $\mathbb{C}$ is a Goursat category if and only if for any commutative diagram of type ($\mathrm{I}$) one has $\alpha(\Eq(f))= \Eq(g)$.
\end{remark}

Note that Theorem~\ref{CKP} characterizes Goursat categories through the property that regular images of equivalence relations are equivalence relations, while Theorem~\ref{Goursatpushout} characterizes them through the property that regular images of certain kernel pairs are kernel pairs.


\section{Projective covers of Goursat categories}
In this section, we characterize the categories with weak finite limits whose regular completion are Goursat categories.
\begin{definition} Let $\mathbb{C}$ be a weakly lex category:
\begin{enumerate}
\item a \textbf{pseudo-relation} on an object X of $\mathbb{C}$ is a pair of parallel arrows $\xymatrix {R \ar@<2pt>[r]^{r_1} \ar@<-2pt>[r]_{r_2} & X;}$ a pseudo-relation is a relation if $r_1$ and $r_2$ are jointly monomorphic;
\item a pseudo-relation $ \xymatrix {R \ar@<2pt>[r]^{r_1} \ar@<-2pt>[r]_{r_2} & X}$ on  $X$  is said to be:
\begin{itemize}
  \item  \textbf{reflexive} when there is an arrow $ r: X \longrightarrow R $ such that $ r_1 r = 1_X = r_2 r $;
  \item  \textbf{symmetric} when there is an arrow  $ \sigma :  R \longrightarrow R $ such that $ r_2 = r_1 \sigma $ and $r_1 = r_2 \sigma $;
  \item  \textbf{transitive} if by considering a weak pullback
  $$
     \xymatrix{
     W \ar[r]^-{p_2}  \ar[d]_{p_1}  & R \ar[d]^{r_1} \\ R \ar[r]_{r_2}& X,
  }
$$
there is an arrow $ t : W \longrightarrow R$ such that $ r_1 t = r_1 p_1 $ and $r_2  t = r_2 p_2$.
 \item a \textbf{pseudo-equivalence relation} if it is  reflexive, symmetric and transitive.
\end{itemize}
\end{enumerate}
\end{definition}

Remark that the transitivity of a pseudo-relation $\xymatrix {R \ar@<2pt>[r]^{r_1} \ar@<-2pt>[r]_{r_2} & X}$ does not depend on the choice of the weak pullback of $r_1$ and $r_2$; in fact, if
$$
     \xymatrix{
     \bar{W} \ar[r]^-{\bar{p_2}}  \ar[d]_{\bar{p_1}}  & R \ar[d]^{r_1} \\ R \ar[r]_{r_2}& X,
  }
$$
is another weak pullback, the factorization $\bar{W} \longrightarrow W$ composed with the transitivity $t: W \longrightarrow R$ ensures that the pseudo-relation is transitive also with respect to the second weak pullback.
\vspace{0.5cm}

The following property from \cite{Enrico} (Proposition 1.1.9) will be useful in the sequel:

\begin{proposition}\emph{\cite{Enrico}}\label{EV}
Let $\mathbb{C}$ be a projective cover of a regular category $\mathbb{A}$. Let $ \xymatrix {R \ar@<2pt>[r]^{r_1} \ar@<-2pt>[r]_{r_2} & X}$ be a pseudo-relation in $\mathbb{C}$ and consider its (regular epimorphism, monomorphism) factorization in $\mathbb{A}$
$$
     \xymatrix{
 R  \ar[rr]^{(r_1, r_2)} \ar@{->>}[rd]_p & {} & X \times X. \\ {} & E \ar@{ >->}[ru]_{(e_1, e_2)} & {}
  }
$$
Then, $R$ is a pseudo-equivalence relation in $\mathbb{C}$ if and only if $S$ is an equivalence relation in $\mathbb{A}$.
\end{proposition}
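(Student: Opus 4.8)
The plan is to prove the two implications separately; in both, the workhorse is that every object of $\mathbb{C}$ is (regular) projective in $\mathbb{A}$, so that maps into $E$ can be lifted along the regular epimorphism $p : R \twoheadrightarrow E$ coming from the factorization. (I write $E$, as in the diagram, for the relation denoted $S$ in the statement, i.e.\ the one carried by the monomorphism $\langle e_1,e_2\rangle$; I use throughout that $e_1 p = r_1$ and $e_2 p = r_2$.)

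Assume first that $R$ is a pseudo-equivalence relation in $\mathbb{C}$, and check reflexivity, symmetry and transitivity of $E$ in turn. Reflexivity and symmetry are immediate: composing the reflexivity section $r : X \to R$ with $p$ gives $\langle e_1,e_2\rangle (pr) = \langle r_1 r, r_2 r\rangle = \langle 1_X,1_X\rangle$, so $1_X \leqslant E$; and from the symmetry arrow $\sigma$ one gets $\langle e_2,e_1\rangle p = \langle r_2,r_1\rangle = \langle r_1\sigma,r_2\sigma\rangle = \langle e_1,e_2\rangle (p\sigma)$, so, since $p$ is a regular epimorphism and $\langle e_1,e_2\rangle$ a monomorphism, $\langle e_2,e_1\rangle$ factors through $\langle e_1,e_2\rangle$, i.e.\ $E^o \leqslant E$. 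The content is transitivity. Here I introduce the genuine pullback $\tilde W = R \times_X R$ in $\mathbb{A}$ (the one corresponding to the chosen weak pullback $W$ of $r_1,r_2$ in $\mathbb{C}$, with projections $q_1,q_2$) and the genuine pullback $P = E \times_X E$ (with projections $\pi_1,\pi_2$), so that $EE$ is the image of $\langle e_1\pi_1,e_2\pi_2\rangle : P \to X\times X$. The comparison $\tilde W \to P$ is a pullback of the regular epimorphism $p\times p$ along the monomorphism $P \rightarrowtail E\times E$, hence a regular epimorphism, so precomposing with it (which does not change the image) exhibits $EE$ as the image of $\langle r_1 q_1, r_2 q_2\rangle : \tilde W \to X\times X$. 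Next, the comparison $w : W \to \tilde W$ with $q_i w = p_i$ is a regular epimorphism: a $\mathbb{C}$-cover of $\tilde W$ is a weak pullback in $\mathbb{C}$ by Remark~\ref{weaklims}, so it factors through $W$, and composing $w$ with this factorization reproduces that cover, whence $w$ is a regular epimorphism. Therefore $EE$ is the image of $\langle r_1 p_1, r_2 p_2\rangle : W \to X\times X$; but the transitivity arrow $t : W \to R$ gives $\langle r_1 p_1, r_2 p_2\rangle = \langle r_1 t, r_2 t\rangle = \langle e_1,e_2\rangle (pt)$, which factors through $E$. Hence $EE \leqslant E$, and $E$ is an equivalence relation.

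For the converse, assume $E$ is an equivalence relation in $\mathbb{A}$ and reverse each step, lifting along $p$. From $1_X \leqslant E$ pick $d : X \to E$ with $e_1 d = e_2 d = 1_X$ and lift it to $r : X \to R$ (using $X \in \mathbb{C}$): then $r_1 r = r_2 r = 1_X$, so $R$ is reflexive. From $E^o \leqslant E$ pick $\tau : E \to E$ with $e_1\tau = e_2$, $e_2\tau = e_1$ and lift $\tau p$ to $\sigma : R \to R$ (using $R \in \mathbb{C}$): then $r_1\sigma = r_2$, $r_2\sigma = r_1$, so $R$ is symmetric. For transitivity, fix a weak pullback $W$ of $r_1,r_2$ in $\mathbb{C}$; as above, the composite $k : W \twoheadrightarrow \tilde W \twoheadrightarrow P$ is a regular epimorphism with $\pi_i k = p p_i$. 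From $EE \leqslant E$ pick $\mu : P \to E$ with $e_1\mu = e_1\pi_1$, $e_2\mu = e_2\pi_2$; then $\mu k : W \to E$ satisfies $e_1(\mu k) = r_1 p_1$ and $e_2(\mu k) = r_2 p_2$, and lifting $\mu k$ to $t : W \to R$ (using $W \in \mathbb{C}$) gives $r_1 t = r_1 p_1$, $r_2 t = r_2 p_2$, so $R$ is transitive.

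The step I expect to be the main obstacle is the transitivity equivalence. Unlike reflexivity and symmetry, which are one-step lifting arguments, it forces one to mediate between the weak pullback $W$ chosen in $\mathbb{C}$, the genuine pullback $R\times_X R$ in $\mathbb{A}$, and the pullback $E\times_X E$ that computes $EE$, and to verify that the relevant comparison arrows are regular epimorphisms, so that the image presenting $EE$ can be transported among these objects in both directions.
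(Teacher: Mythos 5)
The paper does not prove this proposition---it is quoted from Vitale's thesis \cite{Enrico} (Proposition 1.1.9)---so there is no in-text argument to compare against; your proof is correct and is essentially the standard one from that source: lift the reflexivity, symmetry and transitivity data along $p$ using projectivity of $X$, $R$ and $W$ in one direction, and push them down through the mono $\langle e_1,e_2\rangle$ by (regular epi, mono) orthogonality in the other. Your handling of the only delicate point, transitivity, is also right: the comparison $W\to R\times_X R$ is a regular epimorphism because a $\mathbb{C}$-cover of the true pullback is a weak pullback in $\mathbb{C}$ (Remark~\ref{weaklims}) and hence factors through $W$ compatibly, and $R\times_X R\to E\times_X E$ is a pullback of $p\times p$ along a monomorphism, so $EE$ is correctly presented as the image of $\langle r_1p_1,r_2p_2\rangle$ on $W$. (Note the statement's ``$S$'' is a typo for the $E$ of the displayed factorization, as you observed.)
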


\begin{definition}
Let $\mathbb{C}$ be a weakly lex category. We call $\mathbb{C}$ a \textbf{weak Goursat category} if, for any pseudo-equivalence relation $ \xymatrix { R \ar@<2pt>[r]^{r_1} \ar@<-2pt>[r]_{r_2} & X}$  and any regular epimorphism $f: X \twoheadrightarrow Y$, the composite $\xymatrix { R \ar@<2pt>[r]^{fr_1} \ar@<-2pt>[r]_{fr_2} & X}$ is also a pseudo-equivalence relation.
\end{definition}

We use Remark~\ref{weaklims} repeatedly in the next results.

\begin{proposition}\label{cover}
Let $\mathbb{C}$ be a projective cover of a regular category $\mathbb{A}$. Then $\mathbb{A}$ is a Goursat category if and only if $\mathbb{C}$ is a weak Goursat category.
\end{proposition}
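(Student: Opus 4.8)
The plan is to use Proposition~\ref{EV} as the dictionary translating between pseudo-equivalence relations in $\mathbb{C}$ and equivalence relations in $\mathbb{A}$, and to combine it with Theorem~\ref{CKP} on the $\mathbb{A}$-side. So the proof splits into the two implications.

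For the ``only if'' direction, assume $\mathbb{A}$ is Goursat. Take a pseudo-equivalence relation $\xymatrix{R \ar@<2pt>[r]^{r_1} \ar@<-2pt>[r]_{r_2} & X}$ in $\mathbb{C}$ and a regular epimorphism $f\colon X \twoheadrightarrow Y$ in $\mathbb{C}$. I would first form the (regular epi, mono) factorization of $(r_1,r_2)$ in $\mathbb{A}$, giving an equivalence relation $E \rightarrowtail X\times X$ by Proposition~\ref{EV}. Then the regular image $f(E) = fEf^o$ is an equivalence relation on $Y$ in $\mathbb{A}$ by Theorem~\ref{CKP}. The point is that $f(E)$ is precisely the relation obtained from the (regular epi, mono) factorization in $\mathbb{A}$ of the composite $\xymatrix{R \ar@<2pt>[r]^{fr_1} \ar@<-2pt>[r]_{fr_2} & Y}$: indeed $(f\times f)(r_1,r_2)$ factors as $R \twoheadrightarrow E \rightarrowtail X\times X \twoheadrightarrow \ldots$, and one chases the factorization to see its image is $f(E)$. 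Applying Proposition~\ref{EV} in the reverse direction (now to the pseudo-relation $(fr_1,fr_2)$ on the object $Y$, which lies in $\mathbb{C}$ since $f$ is a morphism of $\mathbb{C}$), we conclude $(fr_1,fr_2)$ is a pseudo-equivalence relation in $\mathbb{C}$.

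For the ``if'' direction, assume $\mathbb{C}$ is a weak Goursat category; I would verify the criterion of Theorem~\ref{CKP} for $\mathbb{A}$. Let $g\colon A \twoheadrightarrow B$ be a regular epimorphism and $E$ an equivalence relation on $A$ in $\mathbb{A}$. Using that $\mathbb{C}$ is a projective cover, choose a cover $X \twoheadrightarrow A$ in $\mathbb{C}$ and a cover $R \twoheadrightarrow E$ in $\mathbb{C}$; composing with the projections of $E$ and lifting along $X \twoheadrightarrow A$ (possible since $R$ is projective) yields a pseudo-relation $\xymatrix{R \ar@<2pt>[r]^{r_1} \ar@<-2pt>[r]_{r_2} & X}$ in $\mathbb{C}$ whose image in $\mathbb{A}$ is $E$ (up to pulling back $E$ along $X\twoheadrightarrow A$; one should work with the inverse image $(X\times X)$-relation, which is again an equivalence relation and whose regular image along $X\twoheadrightarrow A$ recovers $E$). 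By Proposition~\ref{EV}, this $(r_1,r_2)$ is a pseudo-equivalence relation. Now lift $g$ to a regular epimorphism $f\colon X \twoheadrightarrow Y$ in $\mathbb{C}$ sitting over $g$ via the covers (again using projectivity together with Remark~\ref{weaklims} to arrange everything inside $\mathbb{C}$, and the fact that covers compose with regular epis to regular epis). Then $(fr_1,fr_2)$ is a pseudo-equivalence relation in $\mathbb{C}$ by hypothesis, so by Proposition~\ref{EV} its factorization in $\mathbb{A}$ is an equivalence relation; chasing the factorizations identifies this equivalence relation with the regular image $g(E)$, which is therefore an equivalence relation. Hence $\mathbb{A}$ is Goursat by Theorem~\ref{CKP}.

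The main obstacle I expect is bookkeeping rather than conceptual: making sure that the various covers and lifts can be chosen compatibly so that the pseudo-relation $(r_1,r_2)$ in $\mathbb{C}$ really does map onto the given data $(E,g)$ in $\mathbb{A}$, and that the (regular epi, mono) factorization of $(fr_1, fr_2)$ in $\mathbb{A}$ is genuinely the regular image $g(E)=gEg^o$ and not merely something mapping onto it. This amounts to a diagram chase using the projectivity of objects of $\mathbb{C}$, the stability of regular epimorphisms under composition and the description of the regular image as a relational composite recorded after Theorem~\ref{CKP}; Remark~\ref{weaklims} is what guarantees all the auxiliary weak limits used in the lifts stay inside $\mathbb{C}$. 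Once the translation is set up correctly, both implications are immediate from Theorem~\ref{CKP} and Proposition~\ref{EV}.
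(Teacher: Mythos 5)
Your ``only if'' direction is essentially the paper's argument: factor $\langle r_1,r_2\rangle$ and $\langle fr_1,fr_2\rangle$ in $\mathbb{A}$, note that the induced comparison between the two images is a regular epimorphism so that the image of $\langle fr_1,fr_2\rangle$ is the regular image $f(E)$, and conclude with Theorem~\ref{CKP} and Proposition~\ref{EV}. That half is correct and matches the paper.

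The ``if'' direction has a genuine gap at its final step. After lifting the data $(E, g\colon A\twoheadrightarrow B)$ to a pseudo-equivalence relation $R\rightrightarrows X$ and a regular epimorphism $f\colon X\twoheadrightarrow Y$ in $\mathbb{C}$, what Proposition~\ref{EV} hands you is an equivalence relation $V$ on the \emph{cover} $Y$ of $B$ (the image of $\langle fr_1,fr_2\rangle$); it gives you nothing on $B$ itself. The relation $g(E)$ on $B$ is only the regular image $S=y(V)$ along the cover $y\colon Y\twoheadrightarrow B$, and ``the regular image of an equivalence relation along a regular epimorphism is an equivalence relation'' is precisely the Goursat property you are trying to establish --- so no amount of chasing factorizations identifies $V$ with $g(E)$, and transitivity of $S$ does not follow as stated. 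The paper closes exactly this gap by constructing the covering square carefully: cover $B$ by $y$ first, pull $g$ back along $y$, and cover the pullback; the resulting outer square is a \emph{regular pushout}, which yields the identities $g^o y = x f^o$ and $y^o g = f x^o$, hence the stronger fact $V=y^{-1}(S)$ rather than merely $S=y(V)$. Only then does the relational calculus close: $SS=yy^oSyy^oSyy^o=yVVy^o=yVy^o=S$. Relatedly, your construction of $f$ is under-justified: a projective lift of $gx$ through an arbitrarily chosen cover $Y\twoheadrightarrow B$ need not be a regular epimorphism; the same pullback-then-cover construction is what guarantees both that $f$ is a regular epimorphism and that the square has the regular pushout property.
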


\begin{proof}
Since $\mathbb{C}$ is a projective cover of a regular category $\mathbb{A}$, then $\mathbb{C}$ is weakly lex.

Suppose that $\mathbb{A}$ is a Goursat category. Let $ \xymatrix { R \ar@<2pt>[r]^{r_1} \ar@<-2pt>[r]_{r_2} & X}$ be a pseudo-equiva\-lence relation in $\mathbb{C}$ and let $f: X \twoheadrightarrow Y$ be a regular epimorphism in $\mathbb{C}$. For the (regular epimorphism, monomorphism) factorizations of $\langle r_1, r_2\rangle$ and $\langle fr_1, fr_2\rangle$ we get the following diagram
\begin{equation}
\label{proj}
    \vcenter{\xymatrix {
   R \ar[rr]^(0.4){\langle r_1, r_2\rangle}  \ar@{=}[ddd] \ar@{->>}[rd]_{p} & {}   & X\times X \ar@{->>}[ddd]^{f \times f} \\
   {} & E  \ar@{ >->}[ru]_(0.4){\langle e_1, e_2\rangle} \ar@{.>}[d]_{w} & {} \\
   {} & S  \ar@{ >->}[rd]^(0.4){\langle s_1, s_2\rangle}  & {} \\
   R \ar@{->>}[ru]^{q} \ar[rr]_(0.4){\langle fr_1, fr_2\rangle} & {} & Y \times Y,
}}
\end{equation}

where $w:E\longrightarrow S$ is induced by the strong epimorphism $p$
\[
      \xymatrix@C=1cm {
   R \ar@{->>}[r]^{p} \ar@{->>}[d]_{q} & E \ar[d]^{(f \times f) \langle e_1, e_2\rangle} \ar@{.>}[dl]_{w} \\ S  \ar@{ >->}[r]_{\langle s_1,s_2\rangle} & Y \times Y.
}
\]
Then $w$ is a regular epimorphism and by the commutativity of the right side of $\eqref{proj}$, one has $S = f(E)$.
By Proposition \ref{EV}, we know that $E$ is an equivalence relation in $\mathbb{A}$. Since $\mathbb{A}$ is a Goursat category, then $S = f(E)$ is also an equivalence relation in $\mathbb{A}$ and by Proposition \ref{EV}, we can conclude that $ \xymatrix { R \ar@<2pt>[r]^{fr_1} \ar@<-2pt>[r]_{fr_2} & X}$ is a pseudo-equivalence relation in $\mathbb{C}$.

Conversely, suppose that $\mathbb{C}$ is a weak Goursat category. Let $ \xymatrix { R \ar@<2pt>[r]^{r_1} \ar@<-2pt>[r]_{r_2} & X}$ be an equivalence relation in $\mathbb{A}$ and $f :X \twoheadrightarrow Y$ a regular epimorphism. We are going to show that $f(R) = S$
$$ \xymatrix {
   R  \ar@{->>}[r]^-{h} \ar@<.5ex>[d]^{r_2} \ar@<-.5ex>[d]_{r_1}  & f(R)=S \ar@<.5ex>[d]^{s_2} \ar@<-.5ex>[d]_{s_1} \\ X  \ar@{->>}[r]_{f} & Y
}
$$
is an equivalence relation; it is obviously reflexive and symmetric. In order to conclude that $\mathbb{A}$ is a Goursat category,  we must prove that $S$ is transitive, i.e that $S$ is an equivalence relation.

We begin by covering the regular epimorphism $f$ in $\mathbb{A}$ with a regular epimorphism $\bar{f}$ in $\mathbb{C}$. For that we take the cover $y : \bar{Y} \twoheadrightarrow Y$, consider the pullback of $y$ and $f$ in $\mathbb{A}$ and take its cover $\alpha: \bar{X} \twoheadrightarrow X \times_Y \bar{Y}$
\[
     \xymatrix@C=1cm{
  \bar{X} \ar@{->>}[rd]^{\alpha} \ar@/^2pc/@{->>}[rrd]^{\bar{f}} \ar@/_2pc/@{->>}[ddr]_{x} & {} & {}\\ {} & X \times_Y \bar{Y} \ar@{->>}[r]^{f'}  \ar@{->>}[d]_{y'}& \bar{Y} \ar@{->>}[d]^{y}\\ {} & X  \ar@{->>}[r]_{f} & Y.
  }
\]
Note that the above outer diagram is a \emph{regular pushout}, so that
\begin{equation}\label{push}f^o y = x \bar{f}^o\;\;\mathrm{and}\;\; y^of=\bar{f}x^o \end{equation}
(Proposition 2.1 in~\cite{ckp}).

Next, we take the inverse image $x^{-1}(R)$ in $\mathbb{A}$, which in an equivalence relation since $R$ is, and cover it to obtain a pseudo-equivalence $W \rightrightarrows \bar{X}$ in $\mathbb{C}$. By assumption $ \xymatrix { W \ar@<2pt>[r] \ar@<-2pt>[r] & \bar{X} \ar@{->>}[r]^{\bar{f}} & \bar{Y}}$ is a pseudo-equivalence relation in $\mathbb{C}$ so it factors through an equivalence relation, say $ \xymatrix { V \ar@<2pt>[r]^{v_1} \ar@<-2pt>[r]_{v_2} & \bar{Y},}$ in $\mathbb{A}$. We have

$$
\xymatrix@C=25pt@R=15pt{
W \ar@{=}[rrr] \ar@{->>}[d]_{w} &{} &{} & W \ar[ddd] \ar@{->>}[rd]^v &{} & {} \\
x^{-1}(R)\ar@{ >->}[dd]_{\langle\rho_1, \rho_2\rangle} \ar@{->>}[dr]_{\pi_R} \ar@{.>}[rrrr]^-{\gamma} & {} & {}& {} &  V \ar@{ >->}[ddl]_(0.3){\langle v_1, v_2\rangle} \ar@{.>>}[dr]^{\lambda} & {} \\
{}  & R \ar@{ >->}[dd]_(.3){\langle r_1,r_2\rangle} \ar@{->>}[rrrr]^(.2){h} & {} & {} & {} & S \ar@{ >->}[dd]^-{\langle s_1,s_2\rangle}  \\
\bar{X} \times \bar{X} \ar@{->>}[dr]_-{x \times x} \ar@{-->>}[rrr]^(.7){\bar{f} \times \bar{f}} & {} & {} & \bar{Y} \times \bar{Y} \ar@{->>}[rrd]^{y \times y} & {} & {} \\
{}  & X \times X  \ar@{->>}[rrrr]_{f \times f} & {} & {} &{} & Y \times Y, }
$$
where $\gamma$ and $\lambda$ are induced by the strong epimorphisms $w$ and $v$, respectively\\
\begin{center}
$
      \xymatrix@C=2cm {
   W \ar@{->>}[r]^w  \ar@{->>}[dd]_v & x^{-1}(R) \ar@{ >->}[d]^{\langle\rho_1, \rho_2\rangle} \ar@{.>}[ddl]_{\gamma} \\ {} & \bar{X} \times \bar{X} \ar@{->>}[d]^{\bar{f} \times \bar{f}} \\ V  \ar@{ >->}[r]_{\langle v_1,v_2\rangle} & \bar{Y} \times \bar{Y}
}
$
and
$
      \xymatrix@C=2cm {
   W \ar@{->>}[r]^v  \ar@{->>}[dd]_{h\pi_R w} & V \ar@{ >->}[d]^{\langle v_1, v_2\rangle} \ar@{.>}[ddl]_{\lambda} \\ {} & \bar{Y} \times \bar{Y} \ar@{->>}[d]^{y \times y} \\ S  \ar@{ >->}[r]_{\langle s_1,s_2\rangle} & Y \times Y.
}
$
\end{center}

Since $\gamma$ is a regular epimorphism, we have $V = \bar{f}(x^{-1}(R))$.
Since $\lambda$ is a regular epimorphism, we have $S = y(V)$. One also has $V = y^{-1}(S)$ because
$$
\begin{matrix}
 y^{-1}(S) &=& y^o S y & \\
 {} &=& y^o f(R) y &\\
{} &=& y^o f R f^o y &\\
{} &=& \bar{f} x^o R x \bar{f}^o & \text{(by \eqref{push})}\\
{} &=& \bar{f}(x^{-1}(R))&\\
{} &=& V.&
\end{matrix}
$$

Finally, $S$ is transitive since
$$
\begin{matrix}
 SS &=& yy^o S yy^o S yy^o &\text{(Lemma~\ref{lem}(2))} \\
 {} &=& yy^{-1}(S) y^{-1}(S) y^o &\\
{} &=& y VV y^o &\\
{} &=& y V y^o & \text{(since V is an equivalence relation)}\\
{} &=& y(V)&\\
{} &=& S.&
\end{matrix}
$$

\end{proof}

We may also consider weak Goursat categories through a property which is more similar to the one mentioned in Theorem~\ref{CKP}:

\begin{lemma} Let $\mathbb{C}$ be a projective cover of a regular category $\mathbb{A}$. Then $\mathbb{C}$ is a weak Goursat category if and only if for any commutative diagram in $\mathbb{C}$
\begin{equation}
\label{equivdef}
    \vcenter{ \xymatrix {
   R  \ar@{->>}[r]^{\varphi} \ar@<.5ex>[d]^{r_2} \ar@<-.5ex>[d]_{r_1}  & S \ar@<.5ex>[d]^{s_2} \ar@<-.5ex>[d]_{s_1} \\ X  \ar@{->>}[r]_{f} & Y
}}
\end{equation}
such that $f$ and $\varphi$ are regular epimorphism and $R$ is a pseudo-equivalence relation, then $S$ is a pseudo-equivalence relation.
\end{lemma}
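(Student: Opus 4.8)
The plan is to establish the two implications separately, the backward one being essentially immediate and the forward one requiring a short ``transfer'' argument through the ambient regular category $\mathbb{A}$.

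For the backward implication — the diagram property forces $\mathbb{C}$ to be a weak Goursat category — I would argue as follows. Given a pseudo-equivalence relation $r_1,r_2\colon R\rightrightarrows X$ in $\mathbb{C}$ and a regular epimorphism $f\colon X\twoheadrightarrow Y$, instantiate the square \eqref{equivdef} with $\varphi=1_R$, $S=R$ and $s_i=fr_i$, which plainly commutes. Since $1_R$ and $f$ are regular epimorphisms and $R$ is a pseudo-equivalence relation, the hypothesis yields that $S=R$, equipped with the parallel pair $(fr_1,fr_2)$, is a pseudo-equivalence relation; this is precisely the defining condition of a weak Goursat category.

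For the forward implication, assume $\mathbb{C}$ is a weak Goursat category and take a commutative square \eqref{equivdef} with $f$ and $\varphi$ regular epimorphisms and $R$ a pseudo-equivalence relation. By the weak Goursat property, $(fr_1,fr_2)\colon R\rightrightarrows Y$ is already a pseudo-equivalence relation, so the remaining task is to transfer this along the regular epimorphism $\varphi$ to $(s_1,s_2)\colon S\rightrightarrows Y$. The key point is that these two parallel pairs have the same image in $\mathbb{A}$: writing the (regular epimorphism, monomorphism) factorization of $\langle s_1,s_2\rangle$ in $\mathbb{A}$ as $\langle s_1,s_2\rangle=\langle e_1,e_2\rangle\, q$ with $q\colon S\twoheadrightarrow E$ and $\langle e_1,e_2\rangle\colon E\rightarrowtail Y\times Y$, the identities $s_i\varphi=fr_i$ give $\langle fr_1,fr_2\rangle=\langle e_1,e_2\rangle\,(q\varphi)$; since $q\varphi$ is a composite of regular epimorphisms in the regular category $\mathbb{A}$ it is again a regular epimorphism, so by uniqueness of image factorizations $E$ is also the image of $\langle fr_1,fr_2\rangle$. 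I then apply Proposition~\ref{EV} twice: first to the pseudo-equivalence relation $R\rightrightarrows Y$, which shows $E$ is an equivalence relation in $\mathbb{A}$; then to the pseudo-relation $S\rightrightarrows Y$, whose image is this same $E$, to conclude that $S$ is a pseudo-equivalence relation.

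The only genuine content is this transfer step, and the point I would be most careful about is the identification of the two images in $\mathbb{A}$: it is exactly here that regularity of $\mathbb{A}$ (closure of regular epimorphisms under composition) and the hypothesis that $\varphi$ is a regular epimorphism enter, together with uniqueness of (regular epi, mono) factorizations. Everything else is bookkeeping — two appeals to Proposition~\ref{EV} and a use of Remark~\ref{weaklims} for the weakly lex structure of $\mathbb{C}$. A more hands-on alternative I would keep in reserve is to build reflexivity, symmetry and transitivity witnesses for $S\rightrightarrows Y$ directly in $\mathbb{C}$ by lifting those of $R\rightrightarrows Y$ along $\varphi$ via projectivity, but routing through $\mathbb{A}$ with Proposition~\ref{EV} keeps the proof short and avoids choosing weak pullbacks.
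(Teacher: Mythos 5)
Your proof is correct and takes essentially the same route as the paper's: the backward direction by instantiating the square with $\varphi=1_R$, $S=R$, $s_i=fr_i$, and the forward direction by identifying the image of $\langle s_1,s_2\rangle$ in $\mathbb{A}$ with the image $E$ of $\langle fr_1,fr_2\rangle$ and applying Proposition~\ref{EV} twice. The paper obtains the comparison $S\to E$ as a diagonal fill-in of the strong epimorphism $\varphi$ against the monomorphism $\langle e_1,e_2\rangle$, whereas you compose regular epimorphisms and invoke uniqueness of image factorizations; these are the same argument.
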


\begin{proof} $(i) \Rightarrow (ii)$ Since $\xymatrix { R \ar@<2pt>[r]^{r_1} \ar@<-2pt>[r]_{r_2} & X}$ is a pseudo-equivalence relation, by assumption $\xymatrix { R \ar@<2pt>[r]^{fr_1} \ar@<-2pt>[r]_{fr_2} & X}$ is also a pseudo-equivalence relation and then its (regular epimorphism, monomorphism)  factorization gives an equivalence relation $\xymatrix { E \ar@<2pt>[r]^{e_1} \ar@<-2pt>[r]_{e_2} & Y}$ in $\mathbb{A}$ (Proposition~\ref{EV}). We have the following commutative diagram
$$ \xymatrix {
   R  \ar@{=}[r] \ar@<.5ex>[dd]^{r_2} \ar@<-.5ex>[dd]_{r_1}  & R \ar@<.5ex>[dd]^(.4){f r_2} \ar@<-.5ex>[dd]_(.4){f r_1} \ar@{->>}[rr]^{\varphi} \ar@{->>}[rd]^{\rho} & {} & S \ar@{.>}[dl]^{\sigma}  \ar@<.5ex>@/^4pc/[ddll]^{s_2} \ar@<-.5ex>@/^4pc/[ddll]_{s_1} \\
   {} & {} & E \ar@<.5ex>[dl]^{e_2} \ar@<-.5ex>[dl]_{e_1} & {} \\
   X  \ar@{->>}[r]_{f} & Y & {} & {}
}
$$
where $\sigma: S \longrightarrow E$ is induced by the strong epimorphism $\varphi$
\[
      \xymatrix@C=1cm {
   R \ar@{->>}[r]^{\varphi} \ar@{->>}[d]_{\rho} & S \ar[d]^{\langle s_1, s_2\rangle} \ar@{.>}[dl]_{\sigma} \\ E  \ar@{ >->}[r]_{\langle e_1,e_2\rangle} & Y \times Y.
}
\]
Then $\sigma$ is a regular epimorphism and $\xymatrix { S \ar@<2pt>[r]^{s_1} \ar@<-2pt>[r]_{s_2} & Y}$ is a pseudo-equivalence relation (Proposition~\ref{EV}).

 $(ii) \Rightarrow (i)$ Let $ \xymatrix { R \ar@<2pt>[r]^{r_1} \ar@<-2pt>[r]_{r_2} & X}$ be a pseudo-equivalence relation in $\mathbb{C}$ and $f: X \twoheadrightarrow Y$ a regular epimorphism. The following diagram is of the type \eqref{equivdef}
$$
\xymatrix {
   R  \ar@{=}[r] \ar@<.5ex>[d]^{r_2} \ar@<-.5ex>[d]_{r_1}  & R \ar@<.5ex>[d]^{fr_2} \ar@<-.5ex>[d]_{fr_1} \\ X  \ar@{->>}[r]_{f} & Y.
}
$$
Since $\xymatrix { R \ar@<2pt>[r]^{r_1} \ar@<-2pt>[r]_{r_2} & X}$ is a pseudo-equivalence relation, then  by assumption\\ $\xymatrix { R \ar@<2pt>[r]^{f r_1} \ar@<-2pt>[r]_{f r_2} & Y}$ is also a pseudo-equivalence relation.
\end{proof}

Alternatively, weak Goursat categories may be characterized through a property more similar to the one mentioned in Remark~\ref{variant}:

\begin{proposition}\label{weaksquare} Let $\mathbb{C}$ be a projective cover of a regular category $\mathbb{A}$. The following conditions are equivalent:
\begin{enumerate}
 \item[(i)] $\mathbb{A}$ is a Goursat category;
 \item[(ii)] $\mathbb{C}$ is a weak Goursat category;
 \item[(iii)] For any commutative diagram of type \emph{($\mathrm{I}$)} in $\mathbb{C}$ where
$$\xymatrix@C=2cm{
   F \ar@<.5ex>[d]^{\beta_2} \ar@<-.5ex>[d]_{\beta_1} \ar@{->>}[r]^{\lambda}& G \ar@<.5ex>[d]^{\rho_2} \ar@<-.5ex>[d]_{\rho_1} \\
   X \ar@{}[dr]|{\mathrm{(I)}} \ar@{->>}[r]^{\alpha} \ar@<3pt>[d]^{f}  & U \ar@<3pt>[d]^{g} \\
   Y \ar@{->>}[r]_{\beta} \ar@<3pt>[u]^{s} & W \ar@<3pt>[u]^t
  }
$$
$F$ is a weak kernel pair of $f$ and $\lambda$ is a regular epimorphism (in $\mathbb{C}$), then $G$ is a weak kernel pair of $g$.
\end{enumerate}
\end{proposition}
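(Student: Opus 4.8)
The plan is to build on Proposition~\ref{cover}, which already supplies (i)\,$\Leftrightarrow$\,(ii), and to close the cycle by proving (ii)\,$\Rightarrow$\,(iii) and (iii)\,$\Rightarrow$\,(i). Throughout I would use three standard facts about a projective cover $\mathbb{C}$ of a regular category $\mathbb{A}$: that $\mathbb{C}$ is weakly lex (Remark~\ref{weaklims}); that the comparison from any weak kernel pair in $\mathbb{C}$ of an arrow $h$ to the kernel pair $\Eq(h)$ computed in $\mathbb{A}$ is a regular epimorphism, so that the $\mathbb{A}$-image of such a weak kernel pair is exactly $\Eq(h)$ (this follows from Remark~\ref{weaklims}, since that comparison is a left factor of a cover of $\Eq(h)$, hence a strong epimorphism); and that a pseudo-relation of $\mathbb{C}$ is a pseudo-equivalence relation precisely when its image in $\mathbb{A}$ is an equivalence relation (Proposition~\ref{EV}).

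For (ii)\,$\Rightarrow$\,(iii) I would start from a commutative diagram of type (I) in $\mathbb{C}$ with the extra data $\beta_{1},\beta_{2}\colon F\rightrightarrows X$ a weak kernel pair of $f$ and $\lambda\colon F\twoheadrightarrow G$ a regular epimorphism with $\rho_{i}\lambda=\alpha\beta_{i}$. By the facts above $F\rightrightarrows X$ is a pseudo-equivalence relation, hence so is $\alpha\beta_{1},\alpha\beta_{2}\colon F\rightrightarrows U$ by (ii); applying the Lemma preceding this Proposition to the square with rows $\lambda$ and $1_{U}$ shows that $G\rightrightarrows U$ is a pseudo-equivalence relation. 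Its image in $\mathbb{A}$ is then an equivalence relation (Proposition~\ref{EV}) and, $\lambda$ being epic, equals the regular image $\alpha(\Eq(f))$; since $\mathbb{A}$ is a Goursat category (Proposition~\ref{cover}), Remark~\ref{variant} gives $\alpha(\Eq(f))=\Eq(g)$. So $\langle\rho_{1},\rho_{2}\rangle$ factors in $\mathbb{A}$ as $G\twoheadrightarrow\Eq(g)\rightarrowtail U\times U$ with regular epic first factor; together with $g\rho_{1}=g\rho_{2}$ (immediate from $g\alpha=\beta f$, $f\beta_{1}=f\beta_{2}$ and $\lambda$ epic) and the projectivity in $\mathbb{A}$ of the objects of $\mathbb{C}$, every test cone for $\Eq(g)$ out of such an object lifts through that regular epimorphism, and the lift lies in $\mathbb{C}$ by fullness; thus $G\rightrightarrows U$ is a weak kernel pair of $g$.

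For (iii)\,$\Rightarrow$\,(i) I would cover an arbitrary commutative diagram of type (I) in $\mathbb{A}$ by one in $\mathbb{C}$. Take a cover $w\colon\bar W\twoheadrightarrow W$, then a cover $\bar Y$ of $Y\times_{W}\bar W$ (yielding regular epimorphisms $y\colon\bar Y\twoheadrightarrow Y$ and $\bar\beta\colon\bar Y\twoheadrightarrow\bar W$), a cover $\bar U$ of $U\times_{W}\bar W$ (yielding $u\colon\bar U\twoheadrightarrow U$ and a split epimorphism $\bar g\colon\bar U\twoheadrightarrow\bar W$, with the lifted section $\bar t$ of $\bar g$ chosen so that $u\bar t=tw$), and finally a cover $\bar X$ of the finite limit $\{(x,\bar u,\bar y)\mid \alpha x=u\bar u,\ fx=y\bar y,\ \bar g\bar u=\bar\beta\bar y\}$, whose three projections induce $x\colon\bar X\twoheadrightarrow X$, $\bar\alpha\colon\bar X\twoheadrightarrow\bar U$ and $\bar f\colon\bar X\twoheadrightarrow\bar Y$. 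One checks that these are regular epimorphisms; projectivity of $\bar Y$ then gives a section $\bar s$ of $\bar f$ lifting $\bar y\mapsto(sy\bar y,\bar t\bar\beta\bar y,\bar y)$, where the identity $\alpha s=t\beta$ (with $u\bar t=tw$) is precisely what puts this triple in the limit, so that $\bar\alpha\bar s=\bar t\bar\beta$. This produces a commutative diagram of type (I) in $\mathbb{C}$ covering the original, whose side faces $(\bar X,X,Y,\bar Y)$ and $(\bar U,U,W,\bar W)$ are regular pushouts. Taking $\bar F$ a cover of $\Eq(\bar f)$ — a weak kernel pair of $\bar f$ in $\mathbb{C}$ — and applying (iii) with $\lambda=1_{\bar F}$, the pseudo-relation $\bar\alpha\bar\beta_{1},\bar\alpha\bar\beta_{2}\colon\bar F\rightrightarrows\bar U$ is a weak kernel pair of $\bar g$, so its $\mathbb{A}$-image is $\Eq(\bar g)$; but that image is visibly the regular image $\bar\alpha(\Eq(\bar f))$. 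Hence $\bar\alpha(\Eq(\bar f))=\Eq(\bar g)$, and transporting along the two regular pushouts via the identities of \eqref{push} — exactly as in the closing displays of the proof of Proposition~\ref{cover} — yields $\alpha(\Eq(f))=u\bigl(\bar\alpha(\Eq(\bar f))\bigr)=u(\Eq(\bar g))=\Eq(g)$; as the type-(I) diagram was arbitrary, $\mathbb{A}$ is a Goursat category by Remark~\ref{variant}.

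The main obstacle I anticipate is the covering step in (iii)\,$\Rightarrow$\,(i): one must lift an entire commutative square while keeping the two horizontal arrows regular epic and the two vertical arrows split epic, and the compatibility $\bar\alpha\bar s=\bar t\bar\beta$ of the lifted sections has to be forced by hand — this is exactly where the identity $\alpha s=t\beta$ built into a diagram of type (I) is needed and where a careless choice of covers breaks down. The milder recurring point, shared with (ii)\,$\Rightarrow$\,(iii), is the dictionary between ``weak kernel pair in $\mathbb{C}$'' and ``its regular image in $\mathbb{A}$ is the kernel pair computed in $\mathbb{A}$'', which rests on Proposition~\ref{EV}, on the projectivity of the objects of $\mathbb{C}$, and on Remark~\ref{variant}.
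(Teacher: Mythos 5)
Your proof is correct, but it closes the cycle differently from the paper: the paper proves $(i)\Rightarrow(iii)$ and $(iii)\Rightarrow(ii)$, while you prove $(ii)\Rightarrow(iii)$ and $(iii)\Rightarrow(i)$. Your $(ii)\Rightarrow(iii)$ is essentially the paper's $(i)\Rightarrow(iii)$ in disguise: both reduce to the Goursat-pushout property of $\mathbb{A}$ (Theorem~\ref{Goursatpushout}/Remark~\ref{variant}) to get $\alpha(\Eq(f))=\Eq(g)$ and then observe that the comparison $G\to\Eq(g)$ is a regular epimorphism; your detour through the Lemma with rows $\lambda$ and $1_U$ is harmless but not needed. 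The real divergence is in the remaining implication. The paper's $(iii)\Rightarrow(ii)$ never leaves $\mathbb{C}$ at the level of the given data: it takes a pseudo-equivalence relation $R$ and a regular epimorphism $f$, applies $(iii)$ to a diagram built from $\Eq(r_1)$ and the split epimorphisms $r_1$, $s_1$ (with their reflexivity sections), deduces $\varphi(\Eq(r_1))=\Eq(s_1)$, and finishes with a relational computation $VV=V$. Your $(iii)\Rightarrow(i)$ instead covers an arbitrary diagram of type $\mathrm{(I)}$ in $\mathbb{A}$ by one in $\mathbb{C}$ and transports $\bar{\alpha}(\Eq(\bar f))=\Eq(\bar g)$ back along regular pushouts. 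This buys a direct verification of the Goursat-pushout criterion without any pseudo-equivalence-relation bookkeeping, at the price of the delicate lifting step: you must check that the three projections of your limit $L$ are regular epimorphisms (your ``one checks'' covers real, if routine, work --- e.g.\ $L\cong (X\times_U\bar U)\times_{Y\times_W\bar W}\bar Y$ exhibits $L\to\bar Y$ and $L\to\bar U$ as composites of pullbacks of covers), and you must force $\bar\alpha\bar s=\bar t\bar\beta$ by choosing $\bar t$ with $u\bar t=tw$ and lifting the triple $(sy,\bar t\bar\beta,1_{\bar Y})$ --- which you do correctly, and which is indeed exactly where $\alpha s=t\beta$ is consumed. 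Both your transfer identities $u(\bar\alpha(\Eq(\bar f)))=\alpha(\Eq(f))$ and $u(\Eq(\bar g))=\Eq(g)$ follow from the regular-pushout calculus of \eqref{push} as you indicate, so the argument goes through.
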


\begin{proof}
$(i) \Leftrightarrow (ii)$ By Proposition \ref{cover}.

$(i) \Rightarrow (iii)$  If we take the kernel pairs of $f$ and $g$, then the induced morphism $\bar{\alpha}:\Eq(f)\longrightarrow \Eq(g)$ is a regular epimorphism by Theorem~\ref{Goursatpushout}. Moreover, the induced morphism $\varphi:F\longrightarrow \Eq(f)$ is also a regular epimorphism. We get
\[
    \vcenter{\xymatrix{
   F \ar@{->>}[r]^{\lambda} \ar@{->>}[d]^{\varphi} & G \ar@{.>}[d]^{\omega} \ar@<.5ex>@/^4pc/[dd]^{\rho_2} \ar@<-.5ex>@/^4pc/[dd]_{\rho_1} \\
   \Eq(f) \ar@<.5ex>[d]^{f_2} \ar@<-.5ex>[d]_{f_1} \ar@{->>}[r]^{\bar{\alpha}} & \Eq(g) \ar@<.5ex>[d]^{g_2} \ar@<-.5ex>[d]_{g_1} \\
    X \ar@{->>}[r]^{\alpha} \ar@<3pt>[d]^{f}   & U \ar@<3pt>[d]^{g}
    \\ Y \ar@{->>}[r]_{\beta} \ar@<3pt>[u]^{s} & W, \ar@<3pt>[u]^t
  }}
\]
where $w:G\longrightarrow \Eq(g)$ is induced by the strong epimorphism $\lambda$
\[
      \xymatrix@C=1cm {
   F \ar@{->>}[r]^{\lambda} \ar@{->>}[d]_{\bar{\alpha}.\varphi} & G \ar[d]^{\langle\rho_1, \rho_2\rangle} \ar@{.>}[dl]_-{w} \\ \Eq(g)  \ar@{ >->}[r]_{\langle g_1,g_2\rangle} & U.
}
\]
This implies that $\omega$ is a regular epimorphism ($\omega \lambda = \bar{\alpha} \varphi$) and then $ \xymatrix@C=1cm {G \ar@<2pt>[r]^{\rho_1} \ar@<-2pt>[r]_{\rho_2} & U}$ is a weak kernel pair of $g$.

$(iii) \Rightarrow (ii)$
Consider the diagram \eqref{equivdef} in $\mathbb{C}$ where $ \xymatrix { R \ar@<2pt>[r]^{r_1} \ar@<-2pt>[r]_{r_2} & X}$ is a pseudo-equivalence relation. We want to prove that $ \xymatrix { S \ar@<2pt>[r]^{s_1} \ar@<-2pt>[r]_{s_2} & Y}$  is also a pseudo-equivalence. Take the (regular epimorphism, monomorphism) factorization of $R$ and $S$ in $\mathbb{A}$ and the induced morphism $\mu$ making the following diagram commutative
$$
\xymatrix {
   R  \ar@{->>}[rr]^{\varphi} \ar@<.5ex>[dd]^(.4){r_2} \ar@<-.5ex>[dd]_(.4){r_1} \ar@{->>}[rd]^{\rho}  & {} &  S \ar@{->>}[rd]^{\sigma} \ar@<.5ex>[dd]^(.3){s_2} \ar@<-.5ex>[dd]_(.3){s_1} & {} \\
   {} & U \ar@{.>}[rr]_(0.3){\mu} \ar@<.5ex>[dl]^{u_2} \ar@<-.5ex>[dl]_{u_1} & {} & V  \ar@<.5ex>[dl]^{v_2} \ar@<-.5ex>[dl]_{v_1} \\
    X  \ar@{->>}[rr]_{f}& {} & Y. & {}
}
$$
Since $\mu$ is a regular epimorphism, then $V = f(U)$ and consequently, $V$ is reflexive and symmetric.

Since $S$ is a pseudo-relation associated to $V$, then $S$ is also a reflexive and symmetric pseudo-relation. We just need to prove that $V$ is transitive. To do so, we apply our assumption to the diagram

$$
\xymatrix@C=25pt@R=20pt{
F \ar@{->>}[rr]^{\lambda} \ar@{->>}[dd]_{\delta} \ar@{->>}[rd] & {} & G \ar@{->>}[dd]^{\alpha} \\
{} & \Eq(r_1) \times_{\varphi(\Eq(r_1))} G \ar@{->>}[ru] \ar@{->>}[dl] & {} \\
 \Eq(r_1) \ar@{->>}[rr]_{\chi} \ar@<-.5ex>[d] \ar@<.5ex>[d] & {} & \varphi(\Eq(r_1))  \ar@<.5ex>[d] \ar@<-.5ex>[d] \\
 R \ar@{}[drr]|{\mathrm{(I)}} \ar@{->>}[rr]^{\varphi} \ar@<3pt>[d]^{r_1}& {}  & S \ar@<3pt>[d]^{s_1} \\
 X  \ar@{->>}[rr]_{f}  \ar@<3pt>[u]^{e_R}  & {} & Y \ar@<3pt>[u]^{e_S}
  }
$$
where $G$ is a  cover of the regular image $\varphi(\Eq(r_1))$ and $F$ is a cover of the pullback $\Eq(r_1) \times_{\varphi(Eq(r_1))} G$. Since $\delta$ is a regular epimorphism, then $ \xymatrix { F \ar@<2pt>[r] \ar@<-2pt>[r] & R}$ is a weak kernel pair of $r_1$. By assumption $ \xymatrix { G \ar@<2pt>[r] \ar@<-2pt>[r] & S}$ is a weak kernel pair of $s_1$, thus $\varphi(\Eq(r_1)) = \Eq(s_1)$. We then have
\[
\begin{matrix}
VV &=& v_2 v_1^o v_1 v_2^o  & \text{(since $V$ is symmetric}) \\
{}      &=& v_2 \sigma \sigma^o v_1^o v_1 \sigma \sigma^o v_2^o & \text{(Lemma~\ref{lem}(2))} \\
{}      &=& s_2 s_1^o s_1 s_2^o  & \text{($v_i\sigma=s_i$)}\\
{}      &=& s_2 \varphi r_1^o r_1 \varphi^o s_2^o  &\text{($\varphi(\Eq(r_1)) = \Eq(s_1)$)}\\
{}      &=& f r_2 r_1^o r_1 r_2^o f^o  & \text{($s_i \varphi = f r_i$ )}\\
{}      &=& f u_2 \rho \rho^o u_1^o u_1 \rho \rho^o u_2^o f^o  &  \text{($u_i\rho=r_i$)} \\
{}      &=& f u_2 u_1^o u_1 u_2^o f^o  & \text{(Lemma~\ref{lem}(2)) } \\
{}      &=& f UU f^o  & \text{(since $U$ is an equivalence relation in $\mathbb{A}$)} \\
{}      &=& f U f^o  &  \\
{}      &=& V. &
\end{matrix}
\]
\end{proof}
\begin{remark}\label{vars}
When $\mathbb{A}$ is a $3$-permutable variety and $\mathbb{C}$ its subcategory of free algebras, then the property stated in Proposition \ref{weaksquare} (iii) is precisely what is needed to obtain the existence of the quaternary operations $p$ and $q$ which characterize $3$-permutable varieties. Let $X$ denote the free algebra on one element. Diagram $\mathrm{(I)}$ below belongs to $\mathbb{C}$
$$
\xymatrix@C=2cm{
   F \ar@{=}[r] \ar@{->>}[d]^{\mu} & F \ar@{->>}[d]^{\lambda \mu}  \\
   \Eq(\nabla_2 + \nabla_2) \ar@<.5ex>[d]^{\pi_2} \ar@<-.5ex>[d]_{\pi_1} \ar[r]^-{\lambda} & \Eq(\nabla_3) \ar@<.5ex>[d] \ar@<-.5ex>[d] \\
    4X \ar@{}[dr]|{\mathrm{(I)}} \ar@{->>}[r]^{1+\nabla_2+1} \ar@<3pt>[d]^{\nabla_2 +\nabla_2}   & 3X \ar@<3pt>[d]^{\nabla_3} \\
     2X \ar@{->>}[r] \ar@<3pt>[u]^{\iota_2 + \iota_1} & X. \ar@<3pt>[u]^{\iota_2}
  }
$$
If $F$ is a cover of $\Eq(\nabla_2+\nabla_2))$, then $ \xymatrix { F \ar@<2pt>[r] \ar@<-2pt>[r] & 4X}$ is a weak kernel pair of $\nabla_2+\nabla_2$.
By assumption $ \xymatrix { F \ar@<2pt>[r] \ar@<-2pt>[r] & 3X}$ is a weak kernel pair of $\nabla_3$, so that $\lambda \mu$ is surjective. We then conclude that $\lambda$ is surjective and the existence of the quaternary operations $p$ and $q$ follows from Theorem $3$ in \cite{gr}.
\end{remark}

\section*{Acknowledgements}
The first author acknowledges partial financial assistance by Centro de Mate--m\'{a}tica da
Universidade de Coimbra---UID/MAT/00324/2013, funded by the
Portuguese Government through FCT/MCTES and co-funded by the
European Regional Development Fund through the Partnership
Agreement\\ PT2020.\\
 The second author acknowledges financial assistance by Fonds de la Recher--che Scientifique-FNRS Cr\'edit Bref S\'ejour \`a l'\'etranger 2018/V 3/5/033 - IB/JN - 11440, which supported his stay at the University of Algarve, where this paper was partially written.



\pagebreak
\noindent
Diana Rodelo \\
CMUC, Department of Mathematics\\
 University of Coimbra\\
3001--501 Coimbra, Portugal\\
Departamento de Matem\'atica, Faculdade de Ci\^{e}ncias e Tecnologia\\
 Universidade do Algarve, Campus de Gambelas\\
 8005--139 Faro, Portugal\\
drodelo@ualg.pt
\vspace{5mm}

\noindent
Idriss Tchoffo Nguefeu \\
Institut de Recherche en Math\'ematique et Physique\\
Universit\'e Catholique de Louvain\\
Chemin du Cyclotron 2, 1348 Louvain-la-Neuve, Belgium\\
idriss.tchoffo@uclouvain.be



\begin{thebibliography}{}

\bibitem{ckp} A. Carboni, G.M. Kelly, M.C. Pedicchio, \emph{Some remarks on Mal'tsev and Goursat categories},  Appl. Categ. Structures 1 (1993), no. 4, 385-421.
\bibitem{CLP} A. Carboni, J. Lambek, M.C. Pedicchio, \emph{Diagram chasing in {M}al'cev categories}, J.~Pure Appl. Algebra 69 (1991) 271--284.
\bibitem{FECLEO} A. Carboni and R. Celia Magno, \emph{The free exact category on a left exact one}, J. Austr. Math. Soc. Ser., A 33 (1982) 295-301.
\bibitem{REC} A. Carboni and E. Vitale, \emph{Regular and exact completions}, J. Pure Appl. Algebra \textbf{125} (1998) 79-116.
\bibitem{G} M. Gran, \emph{Semi-abelian exact completions}, Homology, Hom. Appl. \textbf{4} (2002) 175-189.
\bibitem{compl} M. Gran and D. Rodelo, \emph{On the characterization of J\'onsson-Tarski and of subtractive varieties}, Diagrammes, Suppl. Vol. 67-68 (2012) 101-116.
\bibitem{gr} M. Gran and D. Rodelo, \emph{A new characterisation of Goursat categories},  Appl. Categ. Structures 20 (2012), no. 3, 229-238.

\bibitem{grod} M. Gran and D. Rodelo, \emph{Beck-Chevalley condition and Goursat categories}, J. Pure Appl. Algebra,  221 (2017) 2445-2457.

\bibitem{grt} M. Gran, D. Rodelo and I. Tchoffo Nguefeu, \emph{Some remarks on connecteurs and groupoids in Goursat categories}, Logical Methods in Computer Science (2017) Vol. 13(3:14), 1-12.

\bibitem{hm} J. Hagemann and A. Mitschke, \emph{On {$n$}-permutable congruences},  Algebra Universalis 3 (1973), 8-12.
\bibitem{ECRAC} J. Rosicky and E.M. Vitale, \emph{Exact completions and representations in abelian categories}, Homology, Hom. Appl. \textbf{3} (2001) 453-466.
\bibitem{Enrico} E. Vitale, \emph{Left covering functors}, PhD thesis, Universit\'e catholique de Louvain (1994).
\end{thebibliography}
\end{document}